\newtheorem{theorem}{Theorem}[section]
\newtheorem{proposition}[theorem]{Proposition}
\theoremstyle{definition}
\newtheorem{definition}{Definition}[theorem]
\newtheorem{example}{Example}[definition]
\newtheorem{remark}{Remark}[definition]
\DeclareMathAlphabet{\mathpzc}{OT1}{pzc}{m}{it}
\begin{document}

\begin{center}

{ \LARGE \bfseries An infinite loop space machine for $\infty$-operads}\\[0.5cm]
\textsc{ \large  Gijs Heuts } \\[2cm]

\end{center}

\begin{abstract}
This paper describes a consequence of the more general results of \cite{heuts} which is of independent interest. We construct a functor from the category of dendroidal sets, which models the theory of $\infty$-operads, into the category of $E_\infty$-spaces. Applying May's infinite loop space machine for $E_\infty$-spaces then gives an infinite loop space machine for $\infty$-operads. We show that our machine exhibits the homotopy theory of $E_\infty$-spaces as a localization of the homotopy theory of $\infty$-operads.
\end{abstract}

\tableofcontents

\section*{Introduction}
In this paper we introduce a construction which associates an $E_\infty$-space to any dendroidal set $X$. From this one can then get an infinite loop space. This construction follows from the more general results of \cite{heuts}, which treats the theory of algebras over $\infty$-operads valued in either spaces or $\infty$-categories. Since the topic of an infinite loop space machine for $\infty$-operads seems of independent interest, this paper is a summary of the results relating to this machine. Most proofs however are not given here, but only in the more lengthy paper \cite{heuts}. \par 
The idea is as follows. We will first fix a cofibrant resolution of the terminal object in $\mathbf{dSets}$ (which is simply the nerve of the commutative operad), which we call $E_\infty$. The induced adjunction
\[
\xymatrix@R=40pt@C=40pt{
\mathbf{dSets}/E_\infty \ar@<.5ex>[r] & \mathbf{dSets} \ar@<.5ex>[l]
}
\]
is a Quillen equivalence \cite{moerdijkcisinski}. We then apply the so-called \emph{straightening functor}, which goes from $\mathbf{dSets}/E_\infty$ to the category of $E_\infty$-spaces. One can then apply May's infinite loop space machine \cite{may} to obtain an infinite loop space. The straightening functor we construct generalizes Lurie's straightening functor for simplicial sets \cite{htt}. \par 
We will describe a model structure on $\mathbf{dSets}/E_\infty$, which we call the \emph{covariant model structure}, for which the straightening functor in fact induces a Quillen equivalence between $\mathbf{dSets}/E_\infty$ and the category of $E_\infty$-spaces endowed with the projective model structure. We then show that the covariant model structure is a localization of the usual Cisinski-Moerdijk model structure on $\mathbf{dSets}/E_\infty$ constructed in \cite{moerdijkcisinski}. The covariant model structure can in fact be regarded as a generalization of the usual Kan-Quillen model structure on simplicial sets to the category of dendroidal sets. \par
The plan of this paper is as follows:
\begin{itemize}
\item Section 1 contains a brief discussion of necessary prerequisites concerning dendroidal sets. This discussion is by no means comprehensive, but contains pointers to references which are.
\item Section 2 will discuss left fibrations of dendroidal sets. For any dendroidal set $S$ we will define the covariant model structure on the slice category $\mathbf{dSets}/S$.
\item Section 3 discusses the construction of the straightening functor, which sends dendroidal sets over $S$ to algebras over the simplicial operad $\mathrm{hc}\tau_d(S)$ associated to $S$. In this paper we will only need the special case $S = E_\infty$.
\item Section 4 discusses our infinite loop space machine and contains further discussion of the covariant model structure. In particular, we show that the covariant model structure on $\mathbf{dSets}$ is a left Bousfield localization of the Cisinski-Moerdijk model structure.
\end{itemize}  

\subsection*{Acknowledgements}
I would like to thank Urs Schreiber and Dave Carchedi for some illuminating conversations. Many thanks are due to Ieke Moerdijk, who introduced me to dendroidal sets.

\section{Prerequisites}
Throughout this text the word operad will always mean a \emph{symmetric coloured operad}. An operad $P$ in a given closed symmetric monoidal category $\mathbf{\mathcal{E}}$ with tensor unit $I$ can be described by a set of colours $C$ and for any tuple of colours $(c_1, \ldots, c_n, c)$ an object
\begin{equation*}
P(c_1, \ldots, c_n; c)
\end{equation*} 
of $\mathbf{\mathcal{E}}$, which is to be thought of as the object of operations of $P$ with inputs $c_1, \ldots, c_n$ and output $c$. Furthermore, for $c \in C$, we should have a \emph{unit}
\begin{equation*}
I \longrightarrow P(c; c)
\end{equation*}
and we should have \emph{compositions}
\begin{equation*}
P(c_1, \ldots, c_n; c) \otimes P(d_1^1, \ldots, d_1^{j_1}; c_1) \otimes \ldots \otimes P(d_n^1, \ldots, d_n^{j_n}; c_n) \longrightarrow P(d_1^1, \ldots, d_n^{j_n}; c)
\end{equation*}
Finally, permutations $\sigma \in \Sigma_n$ should act by transformations
\begin{equation*}
\sigma^*: P(c_1, \ldots, c_n; c) \longrightarrow P(c_{\sigma(1)}, \ldots, c_{\sigma(n)}; c)
\end{equation*}
All of these data are required to satisfy various well-known associativity, unit and equivariance axioms. The case of interest in this paper is $\mathbf{\mathcal{E}} = \mathbf{sSets}$, the category of simplicial sets, in which case we obtain the definition of a \emph{simplicial operad}. We will denote the category of simplicial operads by $\mathbf{sOper}$. \par 
Any symmetric monoidal simplicial category $\mathbf{\mathcal{C}}$ gives rise to a simplicial operad $\underline{\mathbf{\mathcal{C}}}$ by
\begin{equation*}
\underline{\mathbf{\mathcal{C}}}(c_1, \ldots, c_n; c) := \mathbf{\mathcal{C}}(c_1 \otimes \cdots \otimes c_n, c)
\end{equation*}
Given a simplicial operad $P$, an algebra over $P$ in $\mathbf{\mathcal{C}}$ is then simply a morphism of operads
\begin{equation*}
P \longrightarrow \underline{\mathbf{\mathcal{C}}}
\end{equation*}
We briefly review the basics of dendroidal sets, of which an extensive treatment can be found in \cite{dendroidalsets} and \cite{moerdijkweiss2}, and fix our notation. The category $\mathbf{\Omega}$ is defined to be the category of finite rooted trees. These are trees equipped with a distinguished outer vertex called the \emph{output} and a (possibly empty) set of outer vertices not containing the output called \emph{inputs}. When drawing trees, we will always omit out- and input vertices from the picture. Recall that each such rooted tree $T$ defines a $\mathbf{Sets}$-operad $\Omega(T)$, the free operad generated by $T$, which is coloured by the edges of $T$. A morphism of trees $S \longrightarrow T$ is defined to be a morphism of operads $\Omega(S) \longrightarrow \Omega(T)$. The category of dendroidal sets is defined to be the category of presheaves on $\mathbf{\Omega}$:
\begin{equation*}
\mathbf{dSets} := \mathbf{Sets}^{\mathbf{\Omega}^{\mathrm{op}}}
\end{equation*} 
The dendroidal set represented by a tree $T$ will be denoted by $\Omega[T]$. We will denote the set of $T$-dendrices of a dendroidal set $X$ by $X_T$. There is an embedding of the simplex category into the category of finite rooted trees, denoted
\begin{equation*}
i: \mathbf{\Delta} \longrightarrow \mathbf{\Omega}
\end{equation*}
defined by sending $[n]$ to the linear tree $L_n$ with $n+1$ edges and $n$ inner vertices. By left Kan extension this induces an adjunction
\[
\xymatrix@R=40pt@C=40pt{
i_!: \mathbf{sSets} \ar@<.5ex>[r] & \mathbf{dSets}: i^*\ar@<.5ex>[l]
}
\]
As is the case in the simplex category, any morphism in $\Omega$ may be factorized into \emph{face and degeneracy maps}. Relations between these maps extending the well-known relations in the simplex category are described in $\cite{dendroidalsets}$. An inner face of $T$ contracting an inner edge $e$ will be denoted $\partial_e\Omega[T]$, an outer face chopping off a corolla with vertex $v$ is denoted $\partial_v\Omega[T]$. We let $F(T)$ denote the set of faces of $T$. For an inner edge we have the \emph{inner horn}
\begin{equation*}
\Lambda^e[T] := \bigcup_{\phi \in F(T) \backslash \partial_e\Omega[T]} \phi
\end{equation*}
and similarly the \emph{outer horn}
\begin{equation*}
\Lambda^v[T] := \bigcup_{\phi \in F(T) \backslash \partial_v\Omega[T]} \phi 
\end{equation*}
\par 
We will call a tree with one vertex and $n$ leaves an \emph{$n$-corolla}. The tree with no vertices and only a single edge will be denoted by $\eta$. We will often blur the distinction between $\Omega[\eta]$ and $\eta$ and write $\eta$ for the former, or $\eta_c$ if we want to be explicit about the fact that the unique edge of $\Omega[\eta]$ has colour $c$. Note that we have an isomorphism
\begin{equation*}
\mathbf{dSets}/\eta \simeq \mathbf{sSets}
\end{equation*} 
A dendroidal set $X$ is said to be an \emph{$\infty$-operad} if it has the extension property with respect to all inner horn inclusions of trees. If $X$ is an $\infty$-operad then the simplicial set $i^*(X)$ is an $\infty$-category and a 1-corolla of $X$ is called an \emph{equivalence} if the induced 1-simplex of $i^*(X)$ is an equivalence. \par 
The functor
\begin{equation*}
\mathbf{\Omega} \longrightarrow \mathbf{Oper_{Sets}}: T \longmapsto \Omega(T)
\end{equation*}
defines, by left Kan extension, an adjunction
\[
\xymatrix@R=40pt@C=40pt{
\tau_d: \mathbf{dSets} \ar@<.5ex>[r] & \mathbf{Oper_{Sets}}: N_d\ar@<.5ex>[l]
}
\]
The right adjoint $N_d$ is called the \emph{dendroidal nerve}. Recall that the category $\mathbf{Oper_{Sets}}$ carries a tensor product $\otimes_{BV}$ called the \emph{Boardman-Vogt tensor product}. For two representables $\Omega[S], \Omega[T] \in \mathbf{dSets}$ their tensor product is defined by
\begin{equation*}
\Omega[S] \otimes \Omega[T] := N_d(\Omega(S) \otimes_{BV} \Omega(T))
\end{equation*}
We extend this definition by colimits to all of $\mathbf{dSets}$. By general arguments the functor $- \otimes X$ has a right adjoint $\mathbf{Hom}_\mathbf{dSets}(X, -)$, making $\mathbf{dSets}$ into a closed symmetric monoidal category. \par 
The category of dendroidal sets is closely related to the category $\mathbf{sOper}$ of simplicial operads. The Boardman-Vogt $W$-construction with respect to the interval $\Delta^1 \in \mathbf{sSets}$ (see \cite{bergermoerdijk2} and \cite{dendroidalsets} for a detailed description) yields a functor
\begin{equation*}
\mathbf{\Omega} \longrightarrow \mathbf{sOper}: T \longmapsto W(\Omega(T))
\end{equation*}
By left Kan extension this gives an adjunction
\[
\xymatrix@R=40pt@C=40pt{
\mathrm{hc}\tau_d: \mathbf{dSets} \ar@<.5ex>[r] & \mathbf{sOper}: \mathrm{hc}N_d\ar@<.5ex>[l]
}
\]
The right adjoint $\mathrm{hc}N_d$ is called the \emph{homotopy coherent dendroidal nerve}. Let us describe the simplicial operad $W(\Omega(T))$ explicitly. Given colours $c_1, \ldots, c_n, c$ of $T$, we describe the space of operations $W(\Omega(T))(c_1, \ldots, c_n; c)$. Suppose there exists a subtree $S$ of $T$ such that the leaves of $S$ are $c_1, \ldots, c_n$ and its root is $c$. If it exists, such an $S$ is unique. Define
\begin{equation*}
W(\Omega(T))(c_1, \ldots, c_n; c) := (\Delta^1)^{I(S)}
\end{equation*}
where $I(S)$ denotes the set of inner edges of the tree $S$. If this set is empty the right-hand side is understood to be the point $\Delta^0$. If there exists no $S$ matching the description above, we let this space of operations be empty. Composition is defined by grafting trees, assigning length 1 to newly arising inner edges (i.e. the edges along which the grafting occurs).  
\par 
We will without explicit mention use basic facts from the theory of model categories, which can for example be found in \cite{hirschhorn} and \cite{hovey}. \par 
A monomorphism $f: X \longrightarrow Y$ of dendroidal sets is said to be \emph{normal} if, for any tree $T \in \mathbf{\Omega}$ and any $\alpha \in Y_T$ which is not in the image of $f$, the stabilizer $\mathrm{Aut}(T)_{\alpha}$ is trivial. A dendroidal set $X$ is called $\emph{normal}$ if the unique map $\emptyset \longrightarrow X$ is normal. The normal monomorphisms are the weakly saturated class generated by the boundary inclusions of trees $\partial\Omega[T] \subseteq \Omega[T]$. By Quillen's small object argument any map of dendroidal sets may be factored as a normal monomorphism followed by a morphism having the right lifting property with respect to all normal monomorphisms, which we refer to as a \emph{trivial fibration}. In particular, factoring a map $\emptyset \longrightarrow X$ in this way, we obtain a normal dendroidal set $X_{(n)}$ which we call a \emph{normalization} of $X$. An easy to prove and very useful fact is that any dendroidal set admitting a map to a normal dendroidal set is itself normal. \par 
A map $f: X \longrightarrow Y$ of dendroidal sets is called an $\emph{operadic equivalence}$ if there exists normalizations $X_{(n)}$ and $Y_{(n)}$ and a map $f_{(n)}: X_{(n)} \longrightarrow Y_{(n)}$ making the obvious diagram commute such that for any $\infty$-operad $Z$ the induced map
\begin{equation*}
i^* \mathbf{Hom_{dSets}}(Y_{(n)}, Z) \longrightarrow i^* \mathbf{Hom_{dSets}}(X_{(n)}, Z)
\end{equation*}
is a categorical equivalence of simplicial sets, i.e. an equivalence in the Joyal model structure. The following was established by Cisinski and Moerdijk in \cite{moerdijkcisinski}:
\begin{theorem}
There exists a combinatorial model structure on the category of dendroidal sets in which the cofibrations are the normal monomorphisms and the weak equivalences are the operadic equivalences. The fibrant objects of this model structure are precisely the $\infty$-operads. By slicing over $\eta$ we obtain a model structure on $\mathbf{sSets}$ which coincides with the Joyal model structure.
\end{theorem}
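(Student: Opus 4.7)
The plan is to invoke Cisinski's general machinery for producing cofibrantly generated model structures on presheaf categories from a class of cofibrations, a functorial cylinder, and a compatible class of anodyne extensions. Three choices have to be made, and the bulk of the work consists in verifying that they satisfy Cisinski's axioms and in identifying the resulting structure with the one claimed.

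First, I would take the cofibrations to be the normal monomorphisms, which by the excerpt form the weakly saturated class generated by the boundary inclusions $\partial\Omega[T] \subseteq \Omega[T]$, so they are cofibrantly generated and admit a small object argument. Next, I would build a functorial cylinder on $\mathbf{dSets}$ out of a dendroidal analogue $J$ of the nerve of the walking isomorphism groupoid, for example $J := \mathrm{hc}N_d$ of the contractible groupoid on two objects, and form $X \otimes J$ via the Boardman--Vogt tensor product. The two inclusions $\eta \rightrightarrows J$ provide the endpoints, and one checks that tensoring with $J$ preserves normal monomorphisms by combinatorial analysis of trees. Finally, I would define the class of anodyne extensions to be the weakly saturated class generated by all inner horn inclusions $\Lambda^e[T] \subseteq \Omega[T]$ together with the pushout-products of the endpoint inclusions $\eta \to J$ with the generating normal monomorphisms $\partial\Omega[T] \subseteq \Omega[T]$. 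These are the dendroidal analogues of the Joyal anodyne extensions.

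With these data in hand, Cisinski's theorem produces a cofibrantly generated model structure whose cofibrations are the normal monomorphisms, whose fibrations between fibrant objects are the maps with the right lifting property against anodyne extensions, and whose fibrant objects are those $X$ for which $X \to *$ has the right lifting property against all anodyne extensions. The inner-horn family alone forces $X$ to be an $\infty$-operad, and one shows conversely that an $\infty$-operad automatically has the lifting property against the extra $J$-generators; this requires a dendroidal version of the argument that inner-fibrant simplicial sets are also $J$-fibrant, which relies on the notion of equivalence of $1$-corollas to produce fillers involving $J$. For the identification of weak equivalences with operadic equivalences, I would use Ken Brown's lemma together with the characterization of fibrant replacement: a map $f \colon X \to Y$ between normal objects is a weak equivalence in the Cisinski model structure if and only if it induces $J$-homotopy equivalences $i^*\mathbf{Hom}_{\mathbf{dSets}}(Y, Z) \to i^*\mathbf{Hom}_{\mathbf{dSets}}(X, Z)$ for every $\infty$-operad $Z$, and $J$-homotopy equivalences in $\mathbf{sSets}$ are precisely the Joyal equivalences.

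For the last statement, slicing over $\eta$ carries the generating cofibrations over $\eta$ to boundary inclusions $\partial\Delta^n \subseteq \Delta^n$, the inner dendroidal horns over $\eta$ to inner Joyal horns $\Lambda^k[\Delta^n] \subseteq \Delta^n$, and $J$ to the nerve of the walking isomorphism, so the induced cofibrations and anodyne generators coincide with those of the Joyal model structure; by uniqueness of model structures with prescribed cofibrations and a prescribed set of generating trivial cofibrations, the two coincide. The main obstacle I anticipate is step three together with its interaction with the tensor product: verifying the pushout-product compatibility between anodyne extensions and normal monomorphisms, and showing that tensoring with $J$ sends normal monos to anodyne extensions, both require delicate induction on trees and a careful filtration of $\Omega[S] \otimes \Omega[T]$ by so-called shuffles, which has no exact counterpart in the simplicial theory and constitutes the genuinely new combinatorial content of the proof.
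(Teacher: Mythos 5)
The paper does not prove this theorem; it is quoted from Cisinski and Moerdijk \cite{moerdijkcisinski}, and your outline is essentially the proof given there: Cisinski's machinery for presheaf categories applied to the normal monomorphisms, the cylinder $-\otimes i_!(J)$ with $J$ the nerve of the walking isomorphism, and the anodyne class generated by the inner horns together with the pushout-products of the endpoint inclusions $\eta \to J$ with the boundary inclusions $\partial\Omega[T] \subseteq \Omega[T]$. You also correctly identify where the real work lies, namely the pushout-product compatibilities established via the shuffle filtration of $\Omega[S]\otimes\Omega[T]$ and the dendroidal theory of equivalences (lifting against root horns whose root corolla is an equivalence) needed to show that $\infty$-operads are $J$-fibrant.
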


We will refer to this model structure as the \emph{Cisinski-Moerdijk model structure}. \par 

\section{Left fibrations and the covariant model structure}
In this section we will define \emph{left fibrations} of dendroidal sets, which are a generalization of left fibrations of simplicial sets. In the same way that left fibrations with codomain a fixed simplicial set $S$ model functors from $S$ into the $\infty$-category of spaces, our left fibrations of dendroidal sets with codomain a dendroidal set $S$ model $S$-algebras in the $\infty$-category of spaces. \par 

\begin{definition}
Let $p: X \longrightarrow S$ be a map of dendroidal sets. Then $p$ is a \emph{left fibration} if the following conditions are satisfied:
\begin{itemize}
\item $p$ is an inner fibration
\item For any corolla $\sigma$ of $S$ having inputs $\{s_1, \ldots, s_n\}$ (note that the set of inputs could be empty) and colors $\{x_1, \ldots, x_n\}$ of $X$ satisfying $p(x_i) = s_i$ for $1 \leq i \leq n$, there exists a corolla $\xi$ of $X$ with inputs $\{x_1, \ldots, x_n \}$ such that $p(\xi) = \sigma$
\item For any tree $T$ with at least two vertices and any leaf vertex $v$ of $T$, there exists a lift in any diagram of the form
\[
\xymatrix{
\Lambda^v[T] \ar[r]\ar[d] & X \ar[d]^p \\
\Omega[T] \ar[r]\ar@{-->}[ur] & S
}
\]
\end{itemize}
\end{definition}

\begin{remark}
\label{rmk:simplicialleftfib}
If $p: X \longrightarrow S$ is a left fibration, then the induced map $i^*p: i^*X \longrightarrow i^*S$ is a left fibration of simplicial sets, i.e. it has the right lifting property with respect to horn inclusions $\Lambda^n_i \longrightarrow \Delta^n$ for $0 \leq i < n$. In particular, if $i^*S$ is a Kan complex, then $i^*X$ is a Kan complex as well by a fundamental result of Joyal (see Proposition 1.2.5.1 of \cite{htt}). Also, if $q: K \longrightarrow L$ is a left fibration of simplicial sets, then $i_!(q)$ is a left fibration of dendroidal sets. 
\end{remark}

Our goal in this section is to describe a model structure on the category $\mathbf{dSets}/S$ in which the fibrant objects are precisely the left fibrations with codomain $S$. We will first endow $\mathbf{dSets}/S$ with the structure of a simplicial category.

\begin{definition}
Given maps $X \longrightarrow S$ and $Y \longrightarrow S$, we define the simplicial set $\mathrm{Map}_S(X, Y)$ as follows:
\begin{equation*}
\mathrm{Map}_S(X, Y)_n := \mathbf{dSets}/S(X \otimes i_!(\Delta^n), Y) 
\end{equation*}
where the map $X \otimes i_!(\Delta^n) \longrightarrow S$ is obtained by composing the projection map $X \otimes i_!(\Delta^n) \longrightarrow X$ with the map $X \longrightarrow S$.
\end{definition}

Note that $\mathrm{Map}_S(X, Y)$ satisfies the following universal property: for any simplicial set $K$ there is an isomorphism
\begin{equation*}
\mathbf{sSets}(K, \mathrm{Map}_S(X, Y)) \simeq \mathbf{dSets}/S(X \otimes i_!(K), Y) 
\end{equation*}
This isomorphism is natural in $K$. \par 
The mapping objects $\mathrm{Map}_S(X,Y)$ make $\mathbf{dSets}/S$ into a simpicial category. Also recall that $\mathbf{dSets}/S$ is tensored and cotensored over $\mathbf{sSets}$, a fact we already used in the definition above.

\begin{definition}
We will call a map in $\mathbf{dSets}/S$ a \emph{covariant cofibration} if its underlying map of dendroidal sets is a cofibration. We will call a map $f: X \longrightarrow Y$ in $\mathbf{dSets}/S$ a \emph{covariant equivalence} if for any left fibration $Z \longrightarrow S$ and normalizations $X_{(n)}$ and $Y_{(n)}$ of $X$ resp. $Y$ the induced map
\begin{equation*}
\mathrm{Map}_S(Y_{(n)}, Z) \longrightarrow \mathrm{Map}_S(X_{(n)}, Z)
\end{equation*}
is a weak homotopy equivalence of simplicial sets.
\end{definition}

\begin{remark}
The condition that there exist normalizations $X_{(n)}$ and $Y_{(n)}$ of $X$ resp. $Y$ such that the induced map
\begin{equation*}
\mathrm{Map}_S(Y_{(n)}, Z) \longrightarrow \mathrm{Map}_S(X_{(n)}, Z)
\end{equation*}
is a weak homotopy equivalence is equivalent to the condition that for any choice of normalizations  $X_{(n)}$ and $Y_{(n)}$ the stated map is a weak homotopy equivalence.
\end{remark}

The following result can be found in Section 6.2 of \cite{heuts}.

\begin{theorem}
\label{thm:covmodelstruct}
There exists a model structure on $\mathbf{dSets}/S$ in which the cofibrations (resp. weak equivalences) are the covariant cofibrations (resp. covariant weak equivalences). This model structure is combinatorial, left proper and simplicial. In this model structure the fibrant objects are precisely the left fibrations over $S$.
\end{theorem}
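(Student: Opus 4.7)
The plan is to apply Jeff Smith's recognition theorem for combinatorial model structures to $\mathbf{dSets}/S$, taking the covariant cofibrations as cofibrations and the covariant equivalences as weak equivalences. The construction closely parallels Lurie's covariant model structure on $\mathbf{sSets}/S$ (HTT 2.1.4), suitably adapted to the combinatorics of trees rather than simplices.

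First I would introduce a small set $J$ of \emph{covariant anodyne} generators, consisting of: the inner horn inclusions $\Lambda^e[T] \hookrightarrow \Omega[T]$, responsible for the inner fibration condition; the leaf-vertex horn inclusions $\Lambda^v[T] \hookrightarrow \Omega[T]$ from the definition of left fibration; the input-edge inclusions $\coprod_{i=1}^n \eta_{c_i} \hookrightarrow \Omega[T]$ for $T$ an $n$-corolla with $n \geq 0$ (including the case $\emptyset \hookrightarrow \Omega[T]$ when $n=0$), encoding the corolla-lifting condition; and the pushout-products of these three families with boundary inclusions $\partial\Delta^m \hookrightarrow \Delta^m$ tensored into $\mathbf{dSets}$ via $i_!$. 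The last family is forced on us if the resulting model structure is to be simplicial. A direct verification against the three conditions in the definition of left fibration then shows that the maps in $\mathbf{dSets}/S$ with the right lifting property against $J$ are precisely the left fibrations over $S$, yielding the identification of fibrant objects asserted in the theorem.

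The key technical step is to show that every map in the weakly saturated class generated by $J$ is a covariant equivalence. Concretely, for any left fibration $Z \longrightarrow S$ and any $J$-cell complex $A \hookrightarrow B$ with normal domain, one must verify that the induced map $\mathrm{Map}_S(B, Z) \longrightarrow \mathrm{Map}_S(A, Z)$ is a trivial Kan fibration of simplicial sets. Using the universal property of $\mathrm{Map}_S$ and adjunction, this reduces to a pushout-product stability statement: the pushout-product of any generator in $J$ with $\partial\Delta^m \hookrightarrow \Delta^m$ again lies in the saturation of $J$, and lifts against any left fibration. This combinatorial stability is the main obstacle, since one must control grafting and subtree decompositions of arbitrary finite rooted trees when analyzing pushout-products of horn and corolla inclusions—strictly more intricate than the corresponding simplicial argument in HTT.

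With pushout-product stability in hand, Smith's theorem applies routinely. The covariant equivalences are accessible because they are detected on a small collection of left fibrations and $\mathrm{Map}_S(-, Z)$ is an accessible functor; two-out-of-three and closure under retracts are tautological from the definition; trivial fibrations in the cofibration sense are covariant equivalences by a standard lifting argument against $J$; and stability of weak equivalences under pushout along a cofibration follows from left properness of the mapping-space construction. The simplicial (pushout-product) axiom and left properness of the full structure follow from the same combinatorial input, and the identification of fibrant objects with left fibrations over $S$ has already been built into the choice of $J$. The main difficulty throughout is the combinatorial stability of the covariant anodyne class under pushout-products, which is where the intricacy of the category $\mathbf{\Omega}$ genuinely enters.
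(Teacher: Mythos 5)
A preliminary caveat: this paper does not prove Theorem \ref{thm:covmodelstruct} at all --- it is quoted from Section 6.2 of \cite{heuts} --- so your proposal can only be measured against the argument given there. That argument does follow essentially the template you describe (the one of \cite{htt}, Sections 2.1.4 and A.2.6): one fixes a generating set of left anodynes, proves that their pushout-products with the maps $i_!(\partial\Delta^m) \longrightarrow i_!(\Delta^m)$ remain in the saturation, and deduces existence of the model structure together with the identification of the fibrant objects. Your choice of generators (inner horns, leaf-vertex horns, the leaf inclusions into corollas including the nullary case, and the simplicial pushout-products) is the correct one, and you correctly locate the main technical content in the pushout-product stability. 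But you only assert that lemma; it is where essentially all of the work in \cite{heuts} lies, since the interaction of the tensor product with the combinatorics of $\mathbf{\Omega}$ is delicate, so as it stands the central step of your proof is a placeholder rather than an argument.

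Two further points are genuinely missing rather than merely deferred. First, normality: unlike $\mathbf{sSets}/S$, not every object of $\mathbf{dSets}/S$ is cofibrant, and the covariant equivalences are defined only through normalizations. Every step of your outline that manipulates $\mathrm{Map}_S(-,Z)$ --- the proof that $J$-cell maps are covariant equivalences, the closure of the trivial cofibrations under pushout and transfinite composition demanded by Smith's theorem, the lifting of left fibrations against arbitrary trivial cofibrations (needed for the ``precisely'' in the statement), and left properness --- must be routed through normalizations and comparisons between them. This is a dendroidal phenomenon with no simplicial counterpart, and in particular left properness is a real statement here, not ``the same combinatorial input''; it cannot be waved at. Second, accessibility of the class $W$ of covariant equivalences: these are defined by testing against \emph{all} left fibrations $Z \longrightarrow S$, which form a proper class, so your claim that they are ``detected on a small collection of left fibrations'' is precisely what needs proof before Smith's theorem applies. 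The standard repair is to re-express $W$ as the preimage of an accessible class under an accessible functor (for instance via a fibrant replacement in an auxiliary cofibrantly generated structure, or via the localization description of Proposition \ref{prop:leftBousfield}), and some such device has to appear explicitly in the proof.
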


We will refer to this model structure as the \emph{covariant model structure}. It turns out that the weak equivalences between fibrant objects of $\mathbf{dSets}/S$ are easily characterized:

\begin{proposition}
Suppose we are given a diagram
\[
\xymatrix{
X \ar[rd]_p\ar[rr]^f & & Y \ar[dl]^q \\
& S & 
}
\]
such that $p$ and $q$ are left fibrations. Then the following are equivalent:
\begin{itemize}
\item[(i)] The map $f$ is a covariant equivalence
\item[(ii)] The map $f$ is an operadic equivalence
\item[(iii)] For every color $s \in S$ the induced map of fibers $f_s: X_s \longrightarrow Y_s$ is a weak homotopy equivalence simplicial sets
\end{itemize} 
\end{proposition}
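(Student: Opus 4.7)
The plan is to prove $(i) \Leftrightarrow (iii)$ directly and to obtain $(i) \Leftrightarrow (ii)$ by appealing to the Bousfield localization result promised in Section 4.

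For $(i) \Rightarrow (iii)$, I would fix a colour $s \in S$ and view $\eta_s$ as an object of $\mathbf{dSets}/S$ via the map picking out $s$. Since $\eta_s$ is normal, it is cofibrant. A direct computation with the Boardman-Vogt tensor product gives the canonical isomorphism $\eta_s \otimes i_!(\Delta^n) \cong i_!(\Delta^n)$ (as $\eta$ acts as a unit up to relabeling of colours), and hence $\mathrm{Map}_S(\eta_s, X) \cong X_s$ naturally in $X$. Since the covariant model structure is simplicial by Theorem \ref{thm:covmodelstruct}, the functor $\mathrm{Map}_S(\eta_s, -)$ is right Quillen into $\mathbf{sSets}$ with the Kan-Quillen model structure, and in particular sends covariant equivalences between fibrant objects to weak homotopy equivalences. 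Applying this to $f$ yields the required maps $X_s \to Y_s$.

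For $(i) \Leftrightarrow (ii)$, I would invoke the localization statement from Section 4: the covariant model structure is a left Bousfield localization of the Cisinski-Moerdijk model structure. Any left fibration is in particular an $\infty$-operad, since the inner horn conditions are special cases of the left fibration axioms, so $X$ and $Y$ are fibrant in both structures. A standard fact about left Bousfield localizations then identifies the covariant and operadic equivalences between such doubly-fibrant objects.

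The main obstacle is $(iii) \Rightarrow (i)$. I would factor $f$ in the covariant model structure as $X \hookrightarrow Z \to Y$ with the first map an acyclic cofibration and the second a covariant fibration. Since $Y$ is fibrant so is $Z$, so $Z$ is a left fibration over $S$. Applying $(i) \Rightarrow (iii)$ to $X \hookrightarrow Z$ and two-out-of-three in $\mathbf{sSets}$, the hypothesis (iii) for $f$ transfers to (iii) for $Z \to Y$, so it suffices to prove the following key lemma: \emph{a covariant fibration $p: Z \to Y$ between left fibrations over $S$ whose maps on fibers $Z_s \to Y_s$ are all weak equivalences is a trivial fibration in the covariant model structure}. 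I would establish this by verifying the right lifting property of $p$ against every boundary inclusion $\partial \Omega[T] \subseteq \Omega[T]$, inducting on the number of vertices of $T$ and using the outer horn-filling conditions for left fibrations to reduce each lifting problem to a simplicial one inside a fiber $Y_s$, which is a Kan complex by Remark \ref{rmk:simplicialleftfib}; the fiberwise weak equivalence assumption then closes the argument. The hard part is organizing this induction cleanly, since the combinatorics of face attachments of trees is substantially more involved than that of simplices, which is presumably why the detailed argument in \cite{heuts} is lengthy.
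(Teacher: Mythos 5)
The paper does not actually prove this proposition; it is one of the results quoted from Section 6 of \cite{heuts}, so there is no in-paper argument to compare against. Judged on its own terms, your proposal gets the easy direction right and correctly isolates where the real work lies, but it does not close that gap. Your proof of $(i)\Rightarrow(iii)$ is correct and is the standard argument: $\eta_s$ is normal, hence cofibrant, $\mathrm{Map}_S(\eta_s,X)\cong X_s$, and SM7 (plus Ken Brown) for the simplicial model structure of Theorem \ref{thm:covmodelstruct} does the rest. For $(i)\Leftrightarrow(ii)$ there are two problems. First, the assertion that ``any left fibration is in particular an $\infty$-operad'' is false for general $S$: the inner-horn condition in the definition is a lifting condition against the map $p:X\to S$, so $X$ itself has inner horn fillers only when $S$ does. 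Second, the localization statement you invoke is only available in this paper for $S=*$ (Proposition \ref{prop:leftBousfield}) and, per the remark following it, for $S$ an $\infty$-operad, whereas the proposition is stated for arbitrary $S$. So this implication either needs $S$ fibrant or an independent argument; as written it is a genuine gap (though not a circularity, since the proof of Proposition \ref{prop:leftBousfield} does not use the present proposition).

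The substantive gap is $(iii)\Rightarrow(i)$. The reduction to the key lemma (a covariant fibration between left fibrations over $S$ that is a fiberwise weak equivalence is a trivial fibration) is fine and is the expected strategy, but the lemma itself is the entire content of the proposition and you only gesture at it. In particular, the claim that each lifting problem against $\partial\Omega[T]\subseteq\Omega[T]$ can be reduced ``to a simplicial one inside a fiber $Y_s$'' is not believable as stated: the dendrex $\Omega[T]\to S$ hits many colours of $S$ at once, the boundary inclusion is neither inner nor leaf-outer anodyne, and the outer horn fillers guaranteed by the definition of a left fibration only concern leaf vertices of trees with at least two vertices, which does not obviously let you collapse a general tree into a single fiber. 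Even in the simplicial special case (the analogue of Lurie's Lemma 2.1.3.4 and its relative version) this step requires a delicate induction or a detour through straightening, and in the dendroidal case it is exactly the part for which \cite{heuts} expends considerable effort. An alternative route that avoids the tree combinatorics entirely would be to push the statement through Theorem \ref{thm:straightening}, where fiberwise equivalences correspond to pointwise weak equivalences of algebras, but that requires knowing how $St_S$ interacts with fibers, which you would also have to establish.
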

\begin{remark}
The fibers appearing in condition (iii) are automatically Kan complexes. One sees this by noting that left fibrations are stable under pullback and invoking Remark \ref{rmk:simplicialleftfib}.
\end{remark}

\section{The straightening functor}

In this section we will describe the relation between the category $\mathbf{dSets}/S$ and the category of algebras over the simplicial operad $\mathrm{hc}\tau_d(S)$ under the assumption that $S$ is cofibrant in the Cisinski-Moerdijk model structure, i.e. normal. \par 
The results of Berger and Moerdijk \cite{bergermoerdijk1}\cite{bergermoerdijk2} in particular yield the following:
\begin{theorem}
If $S$ is normal, so that $\mathrm{hc}\tau_d(S)$ is cofibrant, there exists a left proper simplicial model structure on the simplicial category $\mathrm{Alg}_{\mathrm{hc}\tau_d(S)}(\mathbf{sSets})$ of simplicial $\mathrm{hc}\tau_d(S)$-algebras in which a map of algebras is a weak equivalence (resp. a fibration) if and only if it is a pointwise weak equivalence (resp. a pointwise fibration).
\end{theorem}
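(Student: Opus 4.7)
The strategy is to apply the transfer/lifting theorem of Berger and Moerdijk \cite{bergermoerdijk1, bergermoerdijk2}, which produces a projective model structure on algebras over a sufficiently cofibrant coloured simplicial operad valued in a well-behaved symmetric monoidal model category. The target here is $\mathbf{sSets}$ with the Kan--Quillen model structure; it is cofibrantly generated, has a cofibrant unit, and every object is cofibrant, so the hypotheses on the target are automatic.

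The core input is that $P := \mathrm{hc}\tau_d(S)$ is cofibrant (in particular $\Sigma$-cofibrant) in the Berger--Moerdijk model structure on $\mathbf{sOper}$. For a tree $T$ one has $\mathrm{hc}\tau_d(\Omega[T]) = W(\Omega(T))$, and this Boardman--Vogt resolution is a cofibrant simplicial operad by the main result of \cite{bergermoerdijk2}. Since $\mathrm{hc}\tau_d$ is a left adjoint it preserves pushouts and transfinite compositions. Any normal $S$ can be built from $\emptyset$ by iteratively attaching boundary inclusions $\partial\Omega[T] \hookrightarrow \Omega[T]$, so the required cofibrancy reduces to showing that each map $\mathrm{hc}\tau_d(\partial\Omega[T]) \to \mathrm{hc}\tau_d(\Omega[T])$ is a cofibration of simplicial operads. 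This is proved in \cite{heuts} by a filtration-by-subtrees argument analogous to the one used to verify cofibrancy of $W(\Omega(T))$ itself.

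With $P$ cofibrant, the transfer theorem then applies. Concretely, the forgetful functor $\mathrm{Alg}_P(\mathbf{sSets}) \to \mathbf{sSets}^{\mathrm{col}(P)}$ creates filtered colimits, so Quillen's small object argument yields the required factorizations. The nontrivial hypothesis is that every pushout of a free algebra extension along a generating trivial cofibration is a pointwise weak equivalence; this is established through the functorial path object $A \mapsto A^{\Delta^1}$, which inherits a $P$-algebra structure because the action $P \to \underline{\mathbf{sSets}}$ lands in the endomorphism operad and $(-)^{\Delta^1}$ preserves finite products. Left properness follows from left properness of $\mathbf{sSets}$ together with the $\Sigma$-cofibrancy of $P$: the latter ensures that pushouts along free algebra extensions of pointwise cofibrations preserve pointwise weak equivalences. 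The simplicial enrichment and its compatibility with the model structure (the analogue of SM7) come from tensoring and cotensoring algebras with simplicial sets in the standard way, and reduce colorwise to the corresponding statement in $\mathbf{sSets}$.

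The main obstacle is the first step: establishing cofibrancy of $\mathrm{hc}\tau_d(S)$ for every normal $S$. The delicate point is to control the symmetric group actions on the spaces of operations of $W(\Omega(T))$ throughout the cell-by-cell construction of $S$, and this is where normality of $S$ (ensuring free stabilizers on dendrices) enters decisively. Once that combinatorial analysis is secured, the rest of the argument is a formal application of the Berger--Moerdijk machinery.
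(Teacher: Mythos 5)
Your proposal is correct and takes essentially the same route as the paper, which proves this theorem simply by invoking the Berger--Moerdijk transfer machinery for ($\Sigma$-)cofibrant coloured simplicial operads over $\mathbf{sSets}$; your outline is the standard unwinding of that citation (small object argument for factorizations, path-object argument for the transfer, $\Sigma$-cofibrancy for left properness, colourwise reduction for the simplicial axiom). The only caveat is that the cofibrancy of $\mathrm{hc}\tau_d(S)$ for normal $S$, which you present as the main obstacle, is taken as a known input in the statement itself --- it follows from $\mathrm{hc}\tau_d$ being a left Quillen functor into the Berger--Moerdijk model structure on $\mathbf{sOper}$ --- rather than being the burden of this particular proof.
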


We will now define the so-called \emph{straightening functor}
\begin{equation*}
St_S: \mathbf{dSets}/S \longrightarrow \mathrm{Alg}_{\mathrm{hc}\tau_d(S)}(\mathbf{sSets})
\end{equation*}
Note that we can also describe $\mathbf{dSets}/S$ as a presheaf category; indeed, we have
\begin{equation*}
\mathbf{dSets}/S \simeq \mathbf{Sets}^{(\int_{\mathbf{\Omega}}S)^{\mathrm{op}}}
\end{equation*} 
where $\int_{\mathbf{\Omega}}S$ is the category of elements of $S$. From this we conclude that $\mathbf{dSets}/S$ is generated under colimits by objects of the form $\Omega[T] \longrightarrow S$ for $T \in \mathbf{\Omega}$. Since the straightening functor is supposed to be a left adjoint, it will suffice to construct it on these generators and then extend its definition by a left Kan extension. \par 
First, consider the special case where $S = \Omega[T]$ and $p$ is the identity map of $\Omega[T]$. For any color $c$ of $T$ let $T/c$ denote the subtree of $T$ which consists of $c$ and `everything above $c$'. 

\begin{example}
If $T$ is the tree
\[
\xymatrix@R=10pt@C=12pt{
&&&&&&\\
&*=0{\bullet}\ar@{-}[u]&&*=0{\bullet}&&&\\
&&*=0{\bullet}\ar@{-}[ul]\ar@{-}[ur]&&*=0{\bullet}\ar@{-}[u]&&\\
&&&*=0{\bullet}\ar@{-}\ar@{-}[ul]^{c}\ar@{-}[ur]&&&\\
&&&\ar@{-}[u]&&&\\
&&&&&&
}
\]
then $T/c$ is the tree
\[
\xymatrix@R=10pt@C=12pt{
&&&&&&\\
&*=0{\bullet}\ar@{-}[u]&&*=0{\bullet}&&&\\
&&*=0{\bullet}\ar@{-}[ul]\ar@{-}[ur]&&&\\
&&\ar@{-}[u]^c&&&&\\
&&&&&&
}
\]
\end{example} 

Define the cube
\begin{equation*}
\Delta[T/c] := (\Delta^1)^{\mathrm{col}(T/c)\backslash \{c\}}
\end{equation*}
where the product on the right is understood to be $\Delta^0$ if the set occurring in the exponent is empty.
The $\mathrm{hc}\tau_d(\Omega[T])$-algebra $St_{\Omega[T]}(\mathrm{id}_{\Omega[T]})$ is given by
\begin{equation*}
St_{\Omega[T]}(\mathrm{id}_{\Omega[T]})(c) := \Delta[T/c]
\end{equation*} 
The structure maps
\[
\xymatrix{
\mathrm{hc}\tau_d(\Omega[T])(c_1, \ldots, c_n; c) \times St_{\Omega[T]}(\mathrm{id}_{\Omega[T]})(c_1) \times \cdots \times St_{\Omega[T]}(\mathrm{id}_{\Omega[T]})(c_n) \ar[d] \\ St_{\Omega[T]}(\mathrm{id}_{\Omega[T]})(c)
}
\]
are given by grafting trees, assigning length 1 to the newly arising inner edges $c_1, \ldots, c_n$. 
\par 
Now let $S$ be any dendroidal set and $p: \Omega[T] \longrightarrow S$ a map. We get a map $\mathrm{hc}\tau_d(p)$ of simplicial operads, which induces an adjunction
\[
\xymatrix@R=40pt@C=40pt{
\mathrm{hc}\tau_d(p)_!: \mathrm{Alg}_{\mathrm{hc}\tau_d(\Omega[T])}(\mathbf{sSets}) \ar@<.5ex>[r] &  \mathrm{Alg}_{\mathrm{hc}\tau_d(S)}(\mathbf{sSets}): \mathrm{hc}\tau_d(p)^* \ar@<.5ex>[l]
}
\]
We set
\begin{equation*}
St_S(p) := \mathrm{hc}\tau_d(p)_!(St_{\Omega[T]}(\mathrm{id}_{\Omega[T]}))
\end{equation*}
Functoriality in $p$ is given as follows. Suppose we are given maps
\[
\xymatrix{
\Omega[R] \ar[r]^f & \Omega[T] \ar[r]^p & S
}
\]
If $f$ is a face map of $T$, the map $St_S(f)$ is described by the inclusions
\begin{equation*}
\Delta[S/c] \simeq \Delta[S/c] \times \{0\}^{\mathrm{col}(T/f(c))\backslash f(\mathrm{col}(S/c))} \longrightarrow \Delta[T/c]
\end{equation*}
for colours $c$ of $R$. If $f$ is a degeneracy, it is clear how to define $St_S(f)$. \par 
Having defined the functor $St_S$ on all maps of the form $\Omega[T] \longrightarrow S$, we take a left Kan extension of $St_S$ to all of $\mathbf{dSets}/S$ to obtain a functor
\begin{equation*}
St_S: \mathbf{dSets}/S \longrightarrow \mathrm{Alg}_{\mathrm{hc}\tau_d(S)}(\mathbf{sSets}): X \longmapsto \varinjlim_{\Omega[T] \rightarrow X} St_S(\Omega[T] \rightarrow X)
\end{equation*}
Since $St_S$ preserves colimits, the adjoint functor theorem provides us with a right adjoint to the straightening functor. We call this right adjoint the \emph{unstraightening functor} and denote it $Un_S$. One of the main results of \cite{heuts} is the following:

\begin{theorem}
\label{thm:straightening}
Let $S$ be a normal dendroidal set. Then the adjunction
\[
\xymatrix@R=40pt@C=40pt{
St_S: \mathbf{dSets}/S \ar@<.5ex>[r] & \mathrm{Alg}_{\mathrm{hc}\tau_d(S)}(\mathbf{sSets}): Un_S \ar@<.5ex>[l]
}
\]
is a Quillen equivalence.
\end{theorem}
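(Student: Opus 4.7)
The plan is to split the argument into two parts: first establish that $(St_S, Un_S)$ is a Quillen adjunction, then upgrade to a Quillen equivalence by checking that the derived unit is a covariant equivalence. Because the covariant model structure on $\mathbf{dSets}/S$ is cofibrantly generated and $St_S$ is defined as a left Kan extension from representables, both steps reduce to checks on generating classes and on maps of the form $\Omega[T] \to S$.

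For the Quillen adjunction, I would verify that $St_S$ carries the generating cofibrations $\partial\Omega[T] \subseteq \Omega[T]$ to projective cofibrations of simplicial $\mathrm{hc}\tau_d(S)$-algebras. A direct combinatorial computation shows that $St_S$ of a boundary inclusion is built from inclusions of free algebras by attaching the top-dimensional cell of the cube $\Delta[T/c]$ at each color $c$, hence a projective cofibration. For the generating trivial cofibrations, which include both the inner-horn inclusions $\Lambda^e[T] \to \Omega[T]$ and the leaf-horn inclusions $\Lambda^v[T] \to \Omega[T]$ at leaf vertices, I would exhibit explicit cubical deformation retracts to see that the corresponding maps of algebras are pointwise trivial cofibrations.

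For the Quillen equivalence, I would exploit the fiberwise characterization of covariant equivalences between left fibrations stated above. This reduces the claim that the derived unit $X \to Un_S((St_S X)^f)$ is a covariant equivalence (for $X$ normal) to a fiber-by-fiber computation: the fiber of $Un_S(Y) \to S$ over a color $s$ is computed directly from the algebra values $Y(s)$ via the cubical description of $St_S$, and one must match this against the homotopy type of the fibers of a fibrant replacement of $X$. I would treat the representable case $S = \Omega[T]$ first by induction on $T$, using pushout decompositions along face inclusions together with the contractibility of the cubes $\Delta[T/c]$, and then extend to arbitrary normal $S$ by cellularity and the compatibility of $St_S$ with base change along maps $\Omega[T] \to S$; this base-change compatibility is homotopically meaningful precisely because normality of $S$ guarantees cofibrancy of $\mathrm{hc}\tau_d(S)$, putting us in the regime of the Berger-Moerdijk theorem cited above.

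The main obstacle is the analysis of the leaf-horn inclusions $\Lambda^v[T] \to \Omega[T]$: these have no simplicial analog and carry the essential operadic content that distinguishes the covariant model structure from the Cisinski-Moerdijk structure. Verifying that their images under $St_S$ are trivial cofibrations requires careful cubical bookkeeping, identifying which subfaces of each cube $\Delta[T/c]$ appear in $St_S(\Lambda^v[T] \to S)$ and constructing a compatible family of deformation retracts that respect the grafting operations encoding composition in $\mathrm{hc}\tau_d(S)$. A secondary subtlety is ensuring that the reduction to the representable case preserves all relevant homotopical data; the equivariance constraints imposed by the symmetric group actions on colored operations are the reason for the normality hypothesis, which forces the orbit decompositions involved to be homotopically transparent.
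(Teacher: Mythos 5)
The paper does not actually prove this theorem in the text you were given -- it is quoted from \cite{heuts} -- so there is no in-paper argument to compare against; judged on its own terms, your outline follows the expected Lurie-style strategy (generator checks for the adjunction, reduction to representables and fibers for the equivalence), but it contains two genuine logical gaps. The first concerns the Quillen adjunction. The covariant model structure is defined by its cofibrations and its weak equivalences (maps inducing weak homotopy equivalences on mapping spaces into left fibrations); its trivial cofibrations are \emph{not} generated by the inner-horn and leaf-horn inclusions. Those maps -- together with the corolla inclusions $\coprod_{i}\eta_{c_i}\to\Omega[C_n]$, which you omit entirely even though they appear in the paper's list of generating left anodynes and carry the free-generation-of-operations content -- are merely \emph{some} trivial cofibrations; the structure is a localization and its trivial cofibrations form a strictly larger class with no explicit generating set. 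Showing that $St_S$ sends these maps to trivial cofibrations only buys you that $Un_S$ preserves fibrant objects. To conclude that $St_S$ preserves \emph{all} covariant trivial cofibrations you need a further step, e.g.\ that the adjunction is simplicial, so that for a covariant equivalence $X\to Y$ of normal objects and a fibrant algebra $A$ the map $\mathrm{Map}(St_SY,A)\to\mathrm{Map}(St_SX,A)$ is identified with $\mathrm{Map}_S(Y,Un_SA)\to\mathrm{Map}_S(X,Un_SA)$, which is a weak equivalence precisely because $Un_SA$ is a left fibration; alternatively one can invoke the criterion that a right adjoint preserving fibrations between fibrant objects and fibrant objects suffices. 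Without some such device your generator check does not yield the adjunction.

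The second gap concerns the equivalence step. The fiberwise criterion (the proposition at the end of Section 2) characterizes covariant equivalences only for maps \emph{between left fibrations} over $S$, so it cannot be applied directly to the derived unit $X\to Un_S((St_SX)^{\mathrm{fib}})$ for arbitrary cofibrant $X$; you must first replace $X$ by a left fibration, or argue instead via the counit on free algebras together with the fact that $Un_S$ reflects weak equivalences between fibrant objects. Relatedly, the reduction ``prove it for $\Omega[T]$ and extend by cellularity'' requires that the class of objects on which the derived unit is an equivalence be closed under homotopy colimits, and that in turn needs the derived unstraightening to commute with the relevant homotopy colimits -- not automatic for a right adjoint, and something that must be extracted from the explicit cubical description or from the fiber computation. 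Finally, the combinatorial core of the theorem -- that $St_S$ of a leaf horn and of a corolla inclusion is a pointwise trivial cofibration, and that the fiber of $Un_S(Y)$ over a colour $s$ has the homotopy type of $Y(s)$ -- is correctly identified as the main difficulty but is only asserted, not carried out, so the proposal should be read as a plan rather than a proof.
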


\section{The covariant model structure on $\mathbf{dSets}$}
In the special case that $P$ is the terminal object $*$ of $\mathbf{dSets}$, which is isomorphic to $N_d(\mathcal{C}\mathit{omm})$, Theorem \ref{thm:covmodelstruct} gives us a simplicial model structure on $\mathbf{dSets}$ itself. In this section we will denote the category of dendroidal sets equipped with this model structure by $\mathbf{dSets}_{\mathrm{cov}}$ in order to avoid confusion with the usual Cisinski-Moerdijk model structure. The normalization $\nu: E_\infty \longrightarrow *$ induces a Quillen equivalence
\[
\xymatrix@R=40pt@C=40pt{
\nu_!: (\mathbf{dSets}/E_\infty)_{\mathrm{cov}} \ar@<.5ex>[r] & \mathbf{dSets}_{\mathrm{cov}}: \nu^* \ar@<.5ex>[l]
}
\]
The straightening functor and Theorem \ref{thm:straightening} give us a Quillen equivalence
\[
\xymatrix@R=40pt@C=40pt{
St_{E_\infty}: (\mathbf{dSets}/E_\infty)_{\mathrm{cov}} \ar@<.5ex>[r] & \mathrm{Alg}_{\mathrm{hc}\tau_d(E_\infty)}(\mathbf{sSets}): Un_{E_\infty} \ar@<.5ex>[l]
}
\]
One can then compose with May's infinite loop space machine for $E_\infty$-spaces \cite{may} to obtain a functor assigning an infinite loop space to a dendroidal set. \par
\begin{remark}
In \cite{heuts} we showed how to define symmetric monoidal $\infty$-categories as a certain type of fibration over $E_\infty$. Therefore our construction in particular provides an infinite loop space machine for symmetric monoidal $\infty$-categories.
\end{remark}

Our goal for the remainder of this section is to investigate $\mathbf{dSets}_{\mathrm{cov}}$ a little more closely and show that it is in fact a localization of $\mathbf{dSets}$ equipped with the Cisinski-Moerdijk model structure.

\begin{proposition}
The fibrant objects of $\mathbf{dSets}_{\mathrm{cov}}$ are precisely the dendroidal sets $X$ satisfying the following:
\begin{itemize}
\item[(i)] Let $n \geq 0$ and let $c_1, \ldots, c_n$ denote the colors of the leaves of $C_n$. Then $X$ has all fillers of the form
\[
\xymatrix{
\coprod_{i=1}^n \eta_{c_i} \ar[r]\ar[d] & X \\
\Omega[C_n] \ar@{-->}[ur] &
}
\]
\item[(ii)] If $T$ is a tree with at least 2 vertices and $\phi$ is any face of $T$ apart from a possible face chopping of the root, then any map $\Lambda^\phi[T] \longrightarrow X$ can be extended to $\Omega[T]$, i.e. $X$ has all horn fillers of the form
\[
\xymatrix{
\Lambda^\phi[T] \ar[r]\ar[d] & X \\
\Omega[T] \ar@{-->}[ur] &
}
\]
\item[(iii)] In case the root vertex $v$ of $T$ is unary, $X$ has horn fillers of the form
\[
\xymatrix{
\Lambda^v[T] \ar[r]\ar[d] & X \\
\Omega[T] \ar@{-->}[ur] &
}
\]
\end{itemize}
In particular, if $X$ is fibrant the simplicial set $i^*(X)$ is a Kan complex. 
\end{proposition}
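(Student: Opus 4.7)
The plan is to prove the equivalence in two directions, using the characterization of fibrant objects as left fibrations $X \to *$ (Theorem 2.4). For the direction (i) + (ii) + (iii) $\Rightarrow$ fibrant, I would simply unwind Definition 2.1 with $S = *$: since the terminal dendroidal set has a unique $n$-corolla for each $n$, the corolla-lifting condition of Definition 2.1 reduces to exactly (i), the inner fibration condition is the restriction of (ii) to inner faces, and the leaf vertex outer horn condition is the restriction of (ii) to leaf vertex faces. Thus (i) + (ii) already force $X \to *$ to be a left fibration, so (iii) is actually \emph{redundant} for this easy implication; it instead needs to be recovered as a consequence of fibrancy in the other direction.

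For the reverse direction, fibrant $\Rightarrow$ (i) + (ii) + (iii), conditions (i) and (ii) are again immediate from Definition 2.1, so the real content lies in (iii). My approach would be to show that, for any tree $T$ with at least two vertices and unary root $v$, the inclusion $\Lambda^v[T] \hookrightarrow \Omega[T]$ is a covariant trivial cofibration, so every fibrant $X$ lifts against it. It is already a normal monomorphism, hence a cofibration, so the problem reduces to verifying that it is a covariant weak equivalence. To do this I would exhibit it as an element of the weakly saturated class generated by (a) inner horn inclusions, (b) leaf vertex outer horn inclusions, and (c) the corolla spine inclusions $\coprod_{i=1}^n \eta_{c_i} \hookrightarrow \Omega[C_n]$ of condition (i), all three of which are covariant trivial cofibrations by the other parts of the proposition. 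The natural strategy is an induction on the number of vertices of $T$: decompose $T = C_1 \cup_\eta T'$ by grafting the root $1$-corolla at $v$ onto the subtree $T'$ obtained by chopping $v$ off, and then construct a chain of intermediate dendroidal sets from $\Lambda^v[T]$ up to $\Omega[T]$, each step obtained as a pushout of one of the generators (a)--(c). The base case $T = L_2$ amounts to filling the simplicial outer horn opposite the unary root, which is available because by Remark 1.2 and Joyal's theorem the induced map $i^*X \to \Delta^0$ is a Kan fibration; consequently $i^*X$ is a Kan complex and its 1-corollas are all invertible, which is the reason the root-vertex horn can be filled at all.

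The hard part will be the inductive step for non-linear $T$: the interaction between contracting the inner edge immediately above $v$ and the various leaf vertex outer faces higher up in the tree requires delicate combinatorial bookkeeping, in order to arrange the chain of pushouts so that no non-anodyne pieces are ever introduced. I expect this to be handled by the same style of tree-combinatorial argument developed in the Cisinski--Moerdijk paper and in Section 6 of \cite{heuts}. Finally, the ``In particular'' clause that $i^*X$ is a Kan complex is an immediate corollary of the characterization: applied to the linear trees $L_n$, conditions (ii) and (iii) jointly provide fillers for every simplicial horn $\Lambda^n_k$ with $0 \le k \le n$, with (ii) covering the leaf vertex outer horn together with all inner horns and (iii) supplying the remaining outer horn coming from the unary root of $L_n$, so $i^*X$ has the Kan extension property.
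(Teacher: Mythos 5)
Your reduction of fibrancy to conditions (i) and (ii) is exactly right and agrees with the paper: with $S$ the terminal object, the corolla--lifting clause in the definition of a left fibration becomes (i), and the inner--fibration and leaf--vertex clauses together become (ii), so the only real content is that fibrancy implies (iii). The gap is in your strategy for (iii). You propose to exhibit the root horn inclusion $\Lambda^v[T] \to \Omega[T]$ (with $v$ the unary root vertex) as an element of the weakly saturated class generated by inner horns, leaf--vertex outer horns, and the corolla spine inclusions. This is false already for $T = L_2$. Take the left fibration of simplicial sets $q\colon K \to \Delta^1$ where $K$ is the nerve of the category with objects $a_1, a_2, b$ and non-identity morphisms $a_1 \to b$, $a_2 \to b$, and $q$ sends $a_i$ to $0$ and $b$ to $1$. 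By Remark \ref{rmk:simplicialleftfib}, $i_!(q)$ is a left fibration of dendroidal sets, and since $i_!(\Delta^1)_T = \emptyset$ for every non-linear $T$, the only lifting problems against your three generating families that actually occur are the simplicial ones, all of which $q$ solves; hence $i_!(q)$ has the right lifting property against the entire weakly saturated class you describe. But the $\Lambda^2_2$-horn of $K$ formed by the two edges $a_1 \to b$ and $a_2 \to b$ has no filler (the fiber over $0$ is discrete), so $i_!(q)$ does not lift against the root horn of $L_2$. This is just the classical fact that right horns are not left anodyne, and it kills the ``chain of pushouts of generators'' for every $T$, not only in the base case.

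The mechanism that actually makes (iii) work is the one you use in your base case and then abandon: it is non-combinatorial and uses that the base is the terminal object. For fibrant $X$, every unary corolla $\alpha$ is an equivalence, because all $i_!(\Lambda^n_0)$-horns with initial edge $\alpha$ admit fillers and Joyal's criterion (Proposition 1.2.4.3 of \cite{htt}) applies; one then invokes Theorem 4.2 of \cite{moerdijkcisinski}, which states that an $\infty$-operad admits fillers for root horns with unary root vertex provided the root corolla is sent to an equivalence. That theorem is the dendroidal analogue of Joyal's special outer horn lifting theorem, and it is precisely the ``delicate combinatorial bookkeeping'' you defer to the inductive step --- but it cannot be obtained by saturating your generators, because the hypothesis that the root corolla is an equivalence must enter somewhere, and your generating set never sees it. (Your treatment of the ``in particular'' clause is fine either way: once (i)--(iii) hold, $i^*X$ fills all simplicial horns; alternatively it is a Kan complex directly by Remark \ref{rmk:simplicialleftfib}.)
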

\begin{proof}
By definition, fibrancy is equivalent to $(i)$ and $(ii)$, so we need to show that a fibrant object of $\mathbf{dSets}_{\mathrm{cov}}$ satisfies $(iii)$. Let $\alpha$ be a unary corolla of $X$. For any $n \geq 2$ there exists a lift in any diagram as follows:
\[
\xymatrix{
i_!(\Delta^1) \ar[d]_{\{0,1\}}\ar[dr]^\alpha & \\
i_!(\Lambda^n_0) \ar[d]\ar[r] & X \\
i_!(\Delta^n) \ar@{-->}[ur] & 
}
\]
By a fundamental lemma of Joyal (see Proposition 1.2.4.3 of \cite{htt}) we see that $\alpha$ is an equivalence in $X$. But then Theorem 4.2 of \cite{moerdijkcisinski} tells us that $X$ has property (iii) if the root corolla of $T$ is mapped to $\alpha$. Since $\alpha$ was arbitrary, the result follows. $\Box$
\end{proof}

\begin{definition}
We call a dendroidal set $X$ satisfying the conditions of the previous proposition a \emph{dendroidal Kan complex}. 
\end{definition}

\begin{remark}
If $X$ is isomorphic to $i_!(K)$ for a simplicial set $K$, then $X$ is a dendroidal Kan complex if and only if $K$ is a Kan complex in the usual sense. Kan complexes in the category of simplicial sets can be thought of as modelling $\infty$-groupoids. Similarly, our previous zigzag of Quillen equivalences between $\mathbf{dSets}_{\mathrm{cov}}$ and the category of $E_\infty$-spaces allows us to think of dendroidal Kan complexes as symmetric monoidal $\infty$-groupoids. 
\end{remark}

\begin{remark}
There is a canonical isomorphism of categories
\begin{equation*}
\mathbf{dSets}/\eta \simeq \mathbf{sSets}
\end{equation*}
Under this isomorphism the model structure on $\mathbf{dSets}/\eta$ induced by the covariant model structure coincides with the usual Kan-Quillen model structure on $\mathbf{sSets}$. Indeed, the cofibrations are the monomorphisms and the fibrant objects are precisely the Kan complexes. Therefore the covariant model structure on $\mathbf{dSets}$ can be regarded as a generalization of the Kan-Quillen model structure to dendroidal sets.
\end{remark}

\begin{proposition}
\label{prop:leftBousfield}
If $\mathbf{dSets}$ is equipped with the Cisinksi-Moerdijk model structure, then the identity functor
\begin{equation*}
\mathrm{id}: \mathbf{dSets} \longrightarrow \mathbf{dSets}_{\mathrm{cov}}
\end{equation*}
is a left Bousfield localization.
\end{proposition}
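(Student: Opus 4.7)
The plan is to verify the two criteria that, between cofibrantly generated left proper model structures on the same category, characterize one as a left Bousfield localization of the other: the cofibrations must coincide, and the weak equivalences of the source must be contained in those of the target. Since both structures have the normal monomorphisms as cofibrations by construction, the entire content of the proposition reduces to showing that every operadic equivalence is a covariant equivalence.

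The decisive reduction is the observation that every dendroidal Kan complex is an $\infty$-operad: condition $(ii)$ of the previous proposition already supplies fillers for $\Lambda^\phi[T] \to X$ whenever $\phi$ is any face other than a root-chopping one, and in particular for every inner face, which is precisely the inner horn filling condition defining an $\infty$-operad. Consequently every covariantly fibrant object is also CM-fibrant.

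Given this, I would take an operadic equivalence $f \colon X \to Y$, choose normalizations $X_{(n)}, Y_{(n)}$, and fix a dendroidal Kan complex $Z$ regarded as a left fibration over the terminal object. Unwinding the universal property of the covariant mapping space yields $\mathrm{Map}_*(-, -) = i^* \mathbf{Hom}_{\mathbf{dSets}}(-, -)$. Since $Z$ is an $\infty$-operad by the previous paragraph, the operadic hypothesis produces a categorical equivalence
\begin{equation*}
i^* \mathbf{Hom}_{\mathbf{dSets}}(Y_{(n)}, Z) \longrightarrow i^* \mathbf{Hom}_{\mathbf{dSets}}(X_{(n)}, Z).
\end{equation*}
By Theorem \ref{thm:covmodelstruct} the covariant structure is simplicial, so both mapping simplicial sets are Kan complexes, being mapping spaces from a cofibrant object into a fibrant one. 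A categorical equivalence between Kan complexes is a weak homotopy equivalence, so $f$ is a covariant equivalence as required. The principal obstacle, mild as it is, lies in this last Kan-complex assertion; if one prefers not to invoke the abstract simplicial structure, it can instead be established concretely via a pushout-product argument against the generating normal monomorphisms $\partial\Omega[T] \subseteq \Omega[T]$, showing that $\mathbf{Hom}_{\mathbf{dSets}}(X_{(n)}, Z) \to *$ is itself a left fibration, after which Remark \ref{rmk:simplicialleftfib} closes the argument.
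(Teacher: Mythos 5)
Your proposal is correct and follows the paper's own argument: both reduce the claim to showing operadic equivalences are covariant equivalences (the cofibrations coinciding), and both hinge on the observation that every dendroidal Kan complex is an $\infty$-operad, so that the operadic hypothesis applies to the relevant mapping spaces. The only difference is that you spell out the final upgrade from a categorical equivalence to a weak homotopy equivalence, which the paper leaves implicit.
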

\begin{proof}
The two model structures under consideration have the same cofibrations, so it remains to prove that any operadic equivalence is a covariant equivalence. This will follow immediately from the fact that the Cisinski-Moerdijk model structure has more fibrant objects than the covariant model structure. Indeed, let $f: X \longrightarrow Y$ be an operadic equivalence and let $X_{(n)}$ and $Y_{(n)}$ be normalizations of $X$ and $Y$ respectively. Let $Z$ be any dendroidal Kan complex. Then it is in particular an $\infty$-operad, so that the map
\begin{equation*}
\mathrm{Map}(Y_{(n)}, Z) \longrightarrow \mathrm{Map}(X_{(n)}, Z)
\end{equation*}
is a weak homotopy equivalence by assumption. This proves $f$ is a covariant equivalence. $\Box$
\end{proof}

\begin{remark}
In fact, the same argument will prove something stronger; if $S$ is an $\infty$-operad, the covariant model structure on $\mathbf{dSets}/S$ is a left Bousfield localization of the model structure on this slice category induced by the Cisinski-Moerdijk model structure.
\end{remark}

It is straightforward to describe the left Bousfield localization of Proposition \ref{prop:leftBousfield} as a localization with respect to a family of cofibrations with cofibrant domain, which we will do now.

\begin{definition}
We define the set of \emph{generating left anodynes} to be the set consisting of the following maps:
\begin{itemize}
\item For any tree $T$ with at least two vertices and any leaf vertex $v$ of $T$, the map
\begin{equation*}
\Lambda^v[T] \longrightarrow \Omega[T]
\end{equation*}
\item For any $n \geq 0$, the map
\begin{equation*}
\coprod_{i=1}^n \eta_{c_i} \longrightarrow \Omega[C_n]
\end{equation*}
where $\{c_1, \ldots, c_n\}$ is the set of leaves of $C_n$
\end{itemize}
The weakly saturated class generated by the generating left anodynes is called the class of left anodyne morphisms, although we will not have to use that class in this paper.
\end{definition} 

\begin{proposition}
The left Bousfield localization of the Cisinski-Moerdijk model structure with respect to the set of generating left anodynes is the covariant model structure.
\end{proposition}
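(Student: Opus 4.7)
The Cisinski--Moerdijk model structure is combinatorial and left proper, and every element of the set $J$ of generating left anodynes is a cofibration between cofibrant objects, so the left Bousfield localization $L$ at $J$ exists. By Proposition \ref{prop:leftBousfield}, $\mathbf{dSets}_{\mathrm{cov}}$ is itself a left Bousfield localization of the Cisinski--Moerdijk structure. Both $L$ and $\mathbf{dSets}_{\mathrm{cov}}$ therefore have the same cofibrations --- the normal monomorphisms --- so it suffices to prove they have the same fibrant objects (equivalently, the same weak equivalences).

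To see that every dendroidal Kan complex is fibrant in $L$, I would first verify that the generating left anodynes are already covariant trivial cofibrations. Each one is a normal monomorphism, and the second and third clauses in the definition of a left fibration state exactly that these maps have the left lifting property against every left fibration, and in particular against every fibration in $\mathbf{dSets}_{\mathrm{cov}}$ between fibrant objects. A standard pushout-product argument against the boundary inclusions $\partial\Delta^n \to \Delta^n$ then shows that for every dendroidal Kan complex $X$ and every $j\colon A\to B$ in $J$ the induced map $\mathrm{Map}(B,X) \to \mathrm{Map}(A,X)$ is a trivial Kan fibration, so $j$ is a covariant weak equivalence. Hence $W_L \subseteq W_{\mathrm{cov}}$, so every dendroidal Kan complex is $L$-fibrant.

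For the reverse direction, let $X$ be fibrant in $L$. Then $X$ is an $\infty$-operad, and computing derived Cisinski--Moerdijk mapping spaces via the tensoring with $i_!(\Delta^\bullet)$, the locality of $X$ with respect to each $j\in J$ yields a trivial fibration $\mathrm{Map}(B,X) \to \mathrm{Map}(A,X)$, so in particular strict lifting against $J$ holds on the nose. This establishes condition $(i)$ and the leaf-horn portion of condition $(ii)$ in the characterization of dendroidal Kan complexes; the inner-horn part of $(ii)$ is covered by $X$ being an $\infty$-operad. Condition $(iii)$ is then derived exactly as in the proof of the preceding proposition: the lifting properties already established imply that every unary corolla of $X$ is an equivalence in $i^*X$, and Theorem 4.2 of \cite{moerdijkcisinski} supplies the remaining root horn fillers. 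The main technical obstacle is the passage from locality (a statement about derived mapping spaces) to strict lifting against $J$; once one fixes a suitable simplicial framing on $\mathbf{dSets}$ --- the tensor with $i_!(\Delta^\bullet)$ works because $\mathbf{dSets}$ is tensored and cotensored over $\mathbf{sSets}$ compatibly with both model structures --- this identification becomes routine.
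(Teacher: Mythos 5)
Your overall strategy is the same as the paper's: both localizations have the same cofibrations, so it suffices to match up fibrant objects, and the paper's (very terse) proof asserts exactly that the covariant-fibrant objects are the $J$-local Cisinski--Moerdijk-fibrant objects. Your forward direction is fine in outline, though be aware that the ``standard pushout-product argument'' is where the actual content sits: you need that the pushout-product of a generating left anodyne with a simplicial boundary inclusion still has the left lifting property against left fibrations, which is a genuine combinatorial lemma (proved in \cite{heuts}), not a formality.

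The problem is in your reverse direction. You test $J$-locality of an $L$-fibrant $X$ using the mapping spaces $\mathrm{Map}(-,X)$ built from the tensoring with $i_!(\Delta^\bullet)$, and you justify this by claiming this tensoring is compatible with both model structures. It is not compatible with the Cisinski--Moerdijk structure: that structure restricts to the Joyal model structure over $\eta$, and already $\mathrm{Map}(\eta,X)=i^*X$ is an $\infty$-category rather than a Kan complex for a general $\infty$-operad $X$, so $\mathrm{Map}(A,X)$ with $A$ cofibrant and $X$ fibrant need not be fibrant in $\mathbf{sSets}$. Homotopy function complexes for the Cisinski--Moerdijk structure must instead be computed with a genuine framing (e.g.\ via the maximal Kan complex inside $i^*\mathbf{Hom_{dSets}}(A,X)$), and your ``routine identification'' of locality with strict lifting does not go through as stated. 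Fortunately the step is unnecessary: each $j\in J$ is a cofibration and, tautologically, a $J$-local equivalence, hence a trivial cofibration in the localized model structure $L$; since an $L$-fibrant object is $L$-fibrant over the terminal object, it has the \emph{strict} right lifting property against every $j\in J$ with no derived-mapping-space argument at all. Combined with inner horn filling (from Cisinski--Moerdijk fibrancy) this gives conditions (i) and (ii) of the characterization of dendroidal Kan complexes, and (iii) follows as in the earlier proposition. With that substitution your argument closes up and agrees with the paper's.
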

\begin{proof}
This follows easily from the fact that the fibrant objects in the covariant model structure are exactly the fibrants in the Cisinski-Moerdijk model structure which are local with respect to generating left anodynes. $\Box$ 
\end{proof}

\newpage
\bibliographystyle{plain}
\bibliography{biblio}

\end{document}